\renewcommand{\div}{{\rm div}\,}
\def\d{\partial}
\def\be{\mbox{\boldmath $e$}}
\def\br{\mbox{\boldmath $r$}}
\def\etab{\mbox{\boldmath $\eta$}}
\def\bN{\mbox{\boldmath $N$}}
\def\bR{\mbox{\boldmath $R$}}
\def\1{\mbox{\boldmath $1$}}
\def\0{\mbox{\boldmath $0$}}
\def\cH{\mathcal{H}}
\def\cK{\mathcal{K}}
\def\<{\langle}
\def\>{\rangle}
\theoremstyle{plain}
\newtheorem{thm}{Theorem}[section]
\newtheorem{lem}[thm]{Lemma}
\newtheorem{rmk}[thm]{Remark}
\newtheorem{ex}[thm]{Example}
\title[symmetries in thin shell theory]{Symmetries in the linear theory of the deformation of thin shells}
\author{Yoshiki Jikumaru}
\address{Faculty of Information Networking for Innovation And Design, Toyo University, 1-7-11 Akabanedai, Kita-ku, Tokyo, 115-8650, Japan}
\email{jikumaru@toyo.jp}
\begin{document}
\maketitle
\begin{abstract}
In this paper, we show that symmetries, which are known in the theory of integrable systems, naturally appeared in the classical linear theory of deformations of thin shells.
Our result shows that if the middle surface of a shell becomes `integrable', infinitely many deformations exist that have no shear strains and twisting of the coordinate lines.
\end{abstract}

\section{Introduction}
In the field of architectural surface design, shell structures have the advantage that, with an appropriate geometry, they can be thin and lightweight yet still cover large spaces without supports.
In the last two decades, remarkable connections have been discovered between the shell membrane theory and the theory of integrable systems \cite{RogersSchief,RSSLame,RSSEnneper,SKR}.
In this paper, we focus on deformations in the linear theory of thin shells under the hypothesis of Kirchhoff-Love and reveal another relationship with the integrable systems.
The strains caused by the deformation must satisfy the St. Venant type compatibility conditions in the theory of elasticity, which is known as Gol'denweizer's compatibility conditions in the thin shell theory \cite{Goldenweiser,Novozhilov}.
First, the Gol'denweizer's compatibility conditions are reformulated as an analogue of the Gauss-Mainardi-Codazzi equations for the middle surface of a shell.
It is then shown that Gol'denweizer's conditions can be regarded as ``symmetries'' of the Gauss-Mainardi-Codazzi equations under the assumption that no shear strain and twisting of the coordinate lines occurred in the deformation.
Finally, some examples of symmetries are given for the case where the Gauss equations become soliton equations, which are important in the field of integrable geometry.

\section{Preliminaries}

In the linear theory of thin shells, a shell can be analysed using classical differential geometry of surfaces by considering its middle surface as a smooth surface in 3-dimensional Euclidean space and taking parallel surfaces for the thickness.
For the basic theory of thin shells, see, for example, \cite{GreenZerna,Love,Novozhilov}.
This paper follows the symbols in \cite{Novozhilov,RogersSchief}.

Let us consider a surface $\br = \br (\alpha, \beta)$ in the 3-dimensional Euclidean space parametrized by the curvature line coordinates $(\alpha, \beta)$,
that is, the first and second fundamental forms are diagonalized.
We denote the first fundamental form $I$ by
\begin{equation}
I = A_1^2 \, d\alpha^2 + A_2^2 \, d\beta^2,\quad
A_1^2 = \br_\alpha \cdot \br_\alpha, \quad A_2^2 = \br_\beta \cdot \br_\beta,
\end{equation}
where the subscripts represent the partial derivatives.
Let $\be_1$ and $\be_2$ be the unit tangent vectors on the surface defined by the following relations:
\begin{equation}
\label{eq:tangent1}
\br_\alpha = A_1 \be_1, \quad \br_\beta = A_2 \be_2.
\end{equation}
In the curvature line coordinates, a direct calculation shows that the compatibility condition $(\br_\alpha)_\beta = (\br_\beta)_\alpha$ for the linear system \eqref{eq:tangent1} is given by
\begin{equation}
(A_1)_\beta = p A_2, \quad (A_2)_\alpha = q A_1, \quad 
(\be_1)_\beta = q \be_2, \quad (\be_2)_\alpha = p \be_1,
\end{equation}
where the first and second equations define $p$ and $q$.
Since $\bN$ is the unit vector, we have the relations
\begin{equation}
\bN_\alpha = H_\circ \be_1, \quad \bN_\beta = K_\circ \be_2,
\end{equation}
for some scalar values $H_\circ$ and $K_\circ$.
Then, the \textit{principal curvatures} $\kappa_1, \kappa_2$ and the \textit{principal curvature radii} $R_1, R_2$ are defined as follows:
\begin{equation}
H_\circ = - \kappa_1 A_1 = \frac{A_1}{R_1}, \quad K_\circ = - \kappa_2 A_2 = \frac{A_2}{R_2}.
\end{equation}
In particular, we have
\begin{equation}
\label{eq:Weingarten}
\bN_\alpha = - \kappa_1 \bR_\alpha, \quad
\bN_\beta = - \kappa_2 \bR_\beta.
\end{equation}
This formula is called the \textit{Weingarten formula} or \textit{Rodrigues formula}.
With these notations, the \textit{Gauss formulas} are given by
\begin{equation}
(\be_1)_\alpha = - p \be_2 - H_\circ \bN, \quad (\be_2)_\alpha = p \be_1, \quad 
(\be_1)_\beta = q \be_2, \quad (\be_2)_\beta = - q \be_1 - K_\circ \bN,
\end{equation}
Or equivalently, for the orthogonal frame field $\Phi = (\be_1, \be_2, \bN)$, we have the \textit{Gauss-Weingarten formula} in the matrix form:
\begin{equation}
\Phi_\alpha = \Phi L, \quad
\Phi_\beta = \Phi M,
\end{equation}
where
\begin{equation}
L = 
\begin{pmatrix}
0 & p & H_\circ \\
- p & 0 & 0 \\
- H_\circ & 0 & 0
\end{pmatrix}, \quad
M = 
\begin{pmatrix}
0 & - q & 0 \\
q & 0 & K_\circ \\
0 & - K_\circ & 0    
\end{pmatrix}.
\end{equation}
The compatibility condition for the Gauss-Weingarten system is called the \textit{Gauss-Mainardi-Codazzi equations}, which can be represented in the matrix form as follows:
\begin{equation}
L_\beta - M_\alpha = [L, M], \quad [L, M] = LM -ML,
\end{equation}
or equivalently,
\begin{equation}
\label{eq:GMC}
p_\beta + q_\alpha + H_\circ K_\circ = 0, \quad
(H_\circ)_\beta = p K_\circ, \quad
(K_\circ)_\alpha = q H_\circ.
\end{equation}
The first equation is called the Gauss equation, and the second and third equations are called the Mainardi-Codazzi equations.

\section{Symmetries in the theory of integrable systems}
\label{sec:symmetries}

This section briefly reviews the notion of symmetries in the theory of integrable systems; see, for example, \cite{Fokas1980,Matsukidaira,FokasSantini} for details.
In general, we consider an evolution equation of the type
\begin{equation}
\label{eq:non-linear}
u_t = K(u),
\end{equation}
where $K(u)$ is some functional of $u$.
Let us consider a perturbation $u \to u + \varepsilon S$ and impose the requirement that the equation \eqref{eq:non-linear} to be invariant up to order $\varepsilon$:
\begin{equation}
\label{eq:linearized}
S_t = K'(u)[S],
\end{equation}
where $K'(u)[S]$ means the Fr\'echet derivative of $K$ at the point $u$ in the direction of $S$, that is,
\begin{equation}
K'(u)[S] := \left. \frac{\d}{\d \varepsilon} K(u + \varepsilon S) \right|_{\varepsilon=0}.
\end{equation}
The equation \eqref{eq:linearized} and its solution $S=S_u$ are called the \textit{linearized equation} and a \textit{symmetry} of the original equation \eqref{eq:non-linear}, respectively.
The equation \eqref{eq:non-linear} is called \textit{exactly solvable} if it possesses infinitely many symmetries \cite{Fokas1980,Matsukidaira}.
It is known that each classical `solitonic' equation, such as the Korteweg-de Vries equation, has an infinite number of symmetries, and symmetries are closely related to the conserved quantities of the equation.
We will show that symmetries naturally appear in the strains of the shell.
Therefore, if the middle surface of a shell is `solitonic', there are infinitely many deformations with no shear strain and twisting of the coordinate lines in the deformation.

\section{The strain-displacement relations}

We consider the surface $\br = \br(\alpha, \beta)$ as the middle surface of a thin shell model, and the shell itself can be modelled by the parallel surfaces $\br + z \bN$ ($-\delta/2 < z < \delta/2$), where $\delta$ is the thickness of the shell.
One can consider the deformation of the middle surface as follows:
\begin{equation}
\bR = \br +  \Delta, \quad \Delta = u \be_1 + v \be_2 + w \bN,
\end{equation}
where $\Delta$ is called the \textit{displacement vector}.
Under the hypothesis of Kirchhoff-Love, the displacements completely determine the deformation of the shell.
For the deformed surface $\bR$, we have the following relations:
\begin{equation}
\label{eq:deform_tangent}
\frac{1}{A_1} \bR_\alpha = (1 + \varepsilon_1) \be_1 + \omega_1 \be_2 - \vartheta \bN, \quad
\frac{1}{A_2} \bR_\beta = \omega_2 \be_1 + (1 + \varepsilon_2) \be_2 - \psi \bN,
\end{equation}
where
\begin{align}
\begin{aligned}
\varepsilon_1 &= \frac{1}{A_1} (u_\alpha + p v + H_\circ w), \quad
\varepsilon_2 = \frac{1}{A_2} (v_\beta + qu + K_\circ w), \\
\omega_1 &= \frac{1}{A_1} (v_\alpha - pu), \quad
\omega_2 = \frac{1}{A_2} (u_\beta - q v), \\
\vartheta &= \frac{1}{A_1} (- w_\alpha + H_\circ u), \quad
\psi = \frac{1}{A_2} (- w_\beta + K_\circ v).
\end{aligned}
\end{align}
The above relations are called the \textit{strain-displacement relations}.
Here, $\varepsilon_1$ and $\varepsilon_2$ are called the \textit{normal strains}, and $\omega = \omega_1 + \omega_2$ is called the \textit{shear strain}.
$\vartheta$ and $\psi$ correspond to the deflection angle under the deformation.

A direct calculation shows the following lemma (see \cite{Novozhilov}, p.28):
\begin{lem}
The compatibility condition $(\bR_\alpha)_\beta = (\bR_\beta)_\alpha$ holds if and only if the following equations are satisfied:
\begin{equation}
\label{eq:compati_R12}
\begin{aligned}
(A_1 \varepsilon_1)_\beta - (A_2 \omega)_\alpha - \varepsilon_2 (A_1)_\beta
    &= - ((\omega_1)_\alpha + H_\circ \psi) A_2, \\
(A_2 \varepsilon_2)_\alpha - (A_1 \omega)_\beta - \varepsilon_1 (A_2)_\alpha
    &= - ((\omega_2)_\beta + K_\circ \vartheta) A_1. 
\end{aligned}
\end{equation}
\end{lem}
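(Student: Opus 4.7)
The plan is to differentiate the two relations in \eqref{eq:deform_tangent} and equate the resulting expressions component-wise in the orthonormal frame $(\be_1,\be_2,\bN)$. First I would apply $\partial_\beta$ to $\bR_\alpha = A_1[(1+\varepsilon_1)\be_1 + \omega_1\be_2 - \vartheta\bN]$ via the Leibniz rule, substituting the derivatives $(\be_1)_\beta = q\be_2$, $(\be_2)_\beta = -q\be_1 - K_\circ\bN$, $\bN_\beta = K_\circ\be_2$ from the Gauss--Weingarten formulas, together with $(A_1)_\beta = pA_2$. Then I would perform the analogous computation for $(\bR_\beta)_\alpha$ using $(\be_1)_\alpha = -p\be_2 - H_\circ\bN$, $(\be_2)_\alpha = p\be_1$, $\bN_\alpha = H_\circ\be_1$, and $(A_2)_\alpha = qA_1$. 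The outputs are two vector expressions written as linear combinations of $\be_1,\be_2,\bN$ with coefficients built from $A_i$, $\varepsilon_i$, $\omega_i$, $\vartheta$, $\psi$ and their first derivatives.

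Next I would equate the three pairs of coefficients. For the $\be_1$-component I would regroup the raw Leibniz output as follows: combine $pA_2\varepsilon_1 + A_1(\varepsilon_1)_\beta$ into $(A_1\varepsilon_1)_\beta$, combine $qA_1(\omega_1+\omega_2) + A_2(\omega_2)_\alpha$ into $(A_2\omega)_\alpha - A_2(\omega_1)_\alpha$ with $\omega := \omega_1 + \omega_2$, and identify both the isolated curvature term $A_2 H_\circ\psi$ and the term $\varepsilon_2(A_1)_\beta = pA_2\varepsilon_2$. After rearrangement this is exactly the first line of \eqref{eq:compati_R12}. The $\be_2$-coefficient yields the second line by the same bookkeeping, which one can shortcut by invoking the involution $\alpha\leftrightarrow\beta$, $p\leftrightarrow q$, $A_1\leftrightarrow A_2$, $\varepsilon_1\leftrightarrow\varepsilon_2$, $\omega_1\leftrightarrow\omega_2$, $\vartheta\leftrightarrow\psi$, $H_\circ\leftrightarrow K_\circ$ that the entire setup enjoys.

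The main obstacle is purely organizational: no tool beyond the Gauss--Weingarten formulas and the Leibniz rule is required, and the Mainardi--Codazzi equations \eqref{eq:GMC} enter only through the defining relations $(A_1)_\beta = pA_2$ and $(A_2)_\alpha = qA_1$. The delicate points are sign-tracking on $(\be_1)_\alpha$ and $(\be_2)_\beta$, which carry the curvature terms $H_\circ$ and $K_\circ$, and recognizing the compact combinations $(A_1\varepsilon_1)_\beta$ and $(A_2\omega)_\alpha$ inside the expanded output. I would also record the $\bN$-component, which gives the companion relation $(A_1\vartheta)_\beta - (A_2\psi)_\alpha = A_2 H_\circ\omega_2 - A_1 K_\circ\omega_1$; depending on the convention of \cite{Novozhilov}, this third identity is either absorbed into the tangential pair or regarded as an additional equation not highlighted in the present statement.
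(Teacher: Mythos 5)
Your proposal is correct and is exactly the ``direct calculation'' the paper invokes (the paper itself supplies no details, deferring to \cite{Novozhilov}): differentiating \eqref{eq:deform_tangent}, substituting the Gauss--Weingarten relations, and matching the $\be_1$- and $\be_2$-components yields \eqref{eq:compati_R12} after precisely the regrouping you describe, e.g.\ $(A_2\omega)_\alpha - A_2(\omega_1)_\alpha = qA_1\omega_1 + (A_2\omega_2)_\alpha$. The only point you leave hanging is settled by the paper's subsequent Remark: the $\bN$-component you record is \emph{identically} satisfied once the strain--displacement relations and the Mainardi--Codazzi equations $(H_\circ)_\beta = pK_\circ$, $(K_\circ)_\alpha = qH_\circ$ are used (these, not the relations $(A_1)_\beta = pA_2$, $(A_2)_\alpha = qA_1$, are the Mainardi--Codazzi equations), which is exactly why the ``if and only if'' reduces to the two tangential equations.
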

\begin{rmk}
Note that the third component of the condition $(\bR_\alpha)_\beta = (\bR_\beta)_\alpha$ identically holds:
\begin{equation}
(A_1 \vartheta)_\beta - (A_2 \psi)_\alpha + \left( \frac{\omega_1}{R_2} - \frac{\omega_2}{R_1} \right) A_1 A_2 = 0.
\end{equation}
Moreover, we have the relation
\begin{equation}
\label{eq:compati_R3}
\frac{\psi_\alpha-p\psi}{A_1} + \frac{\omega_2}{R_1}
= \frac{\vartheta_\beta-q\psi}{A_2} + \frac{\omega_1}{R_2},
\end{equation}
which is used to define the twisting of the coordinate lines, see \eqref{eq:k1k2tau}.
\end{rmk}
Using the equations \eqref{eq:compati_R12}, we define the quantities $P$ and $Q$ as follows:
\begin{equation}
\label{eq:deformed_PQ}
\begin{aligned}
P &:= - ((\omega_1)_\alpha + H_\circ \psi)
= \frac{1}{A_2} ((A_1 \varepsilon_1)_\beta - (A_2 \omega)_\alpha - \varepsilon_2 (A_1)_\beta),\\
Q &:= - ((\omega_2)_\beta + K_\circ \vartheta)
= \frac{1}{A_1} ((A_2 \varepsilon_2)_\alpha - (A_1 \omega)_\beta - \varepsilon_1 (A_2)_\alpha).
\end{aligned}
\end{equation}

\section{The linear theory of thin shells and the hypothesis of Kirchhoff-Love}

In the following, we ignore the second-order terms of displacements and their derivatives; that is, we consider the linear theory of thin shells.
In this case, the line element $A_1' \, d\alpha$ and $A_2' \, d\beta$ along the coordinate lines on the deformed middle surface $\bR = \bR(\alpha, \beta)$ can be represented as follows:
\begin{equation}
A_1' = A_1 (1 + \varepsilon_1), \quad
A_2' = A_2 (1 + \varepsilon_2).
\end{equation}
Moreover, if we define
\begin{equation}
\be_1' = \frac{\bR_\alpha}{A_1'}, \quad
\be_2' = \frac{\bR_\beta}{A_2'}, \quad
\bN' = \be_1' \times \be_2',
\end{equation}
then the frame field $\Phi' = (\be_1', \be_2', \bN')$ on the deformed surface $\bR$ becomes
\begin{equation}
\Phi' = \Phi (E + \Omega), \quad
\Omega =
\begin{pmatrix}
0 & \omega_2 & \vartheta\\
\omega_1 & 0 & \psi\\
- \vartheta & - \psi & 0
\end{pmatrix},
\end{equation}
where $E$ denotes the identity matrix.
Note that if $\omega = \omega_1 + \omega_2 = 0$, then the matrix $\Omega$ becomes skew-symmetric.
Under the hypothesis of Kirchhoff-Love, the displacements $u_{(z)}$, $v_{(z)}$, $w_{(z)}$ for the layer $\br^{(z)} = \br + z \bN$ are determined by
\begin{equation}
u_{(z)} = u + z \vartheta, \quad
v_{(z)} = v + z \psi, \quad
w_{(z)} = w.
\end{equation}
Then the normal strains $\varepsilon_1^{(z)}$, $\varepsilon_2^{(z)}$ and the shear strain $\omega^{(z)}$ on the layer $\br^{(z)}$ are given by
\begin{equation}
\varepsilon_1^{(z)} = \frac{\varepsilon_1 - zk_1}{1+z/R_1}, \quad
\varepsilon_2^{(z)} = \frac{\varepsilon_2 - zk_2}{1+z/R_2}, \quad
\omega^{(z)} = \frac{(1-z^2 \cK) \omega + 2 (1 + z \cH) z \tau}{1 - 2 \cH z + \cK z^2},
\end{equation}
see \cite{Novozhilov} for details.
Here, the quantities $k_1$, $k_2$ and $\tau$ are defined by
\begin{equation}
\label{eq:k1k2tau}
\begin{aligned}
k_1 &= - \frac{\vartheta_\alpha + p \psi}{A_1}, \quad
k_2 = - \frac{\psi_\beta + q \vartheta}{A_2}, \\
\tau &= \frac{\psi_\alpha - p \vartheta}{A_1} + \frac{\omega_2}{R_1}
= \frac{\vartheta_\beta - q \psi}{A_2} + \frac{\omega_1}{R_2},
\end{aligned}
\end{equation}
and
\begin{equation}
\cH = -\frac{1}{2} \left( \frac{1}{R_1} + \frac{1}{R_2} \right) = \frac{1}{2} (\kappa_1 + \kappa_2), \quad
\cK = \frac{1}{R_1R_2} = \kappa_1 \kappa_2,
\end{equation}
are the mean and Gauss curvatures of the middle surface $\br$, respectively.
Here, in the definition of $\tau$, we use the identity \eqref{eq:compati_R3}.
The quantities $k_1$, $k_2$ and $\tau$ characterize the change of curvature and twist of the middle surface of a shell as it deforms (see \cite{Novozhilov}, p.25).

The six strains $\varepsilon_1$, $\varepsilon_2$, $\omega$, $k_1$, $k_2$ and $\tau$ are linear functions of the displacements $u$, $v$, $w$ and their derivatives, and these quantities completely determine the deformation of the shell, since, in general, the coefficients of the first and second fundamental forms have six components.
The strains are determined by a deformation of the middle surface; however, for the deformation to be determined from the strain, certain compatibility conditions must be satisfied.
The conditions for the strains are called St. Venant's conditions in the theory of elasticity, which are known as Gol'denweizer's condition in the linear theory of thin shells \cite{Goldenweiser,Novozhilov}.

%
\section{The compatibility condition of the strains}

In this section, we reformulate the result of Gol'denweizer as an analogue of the Gauss-Mainardi-Codazzi equations.
First, a brief overview of our strategy is given.
Using the equation \eqref{eq:compati_R12}, the Gauss-Weingarten type formula for the frame $\Phi'$ on the deformed surface can be represented as follows:
\begin{equation}
\Phi_\alpha' = \Phi' (L + L'), \quad
\Phi_\beta' = \Phi' (M + M'),
\end{equation}
where the explicit form of the matrices $L'$ and $M'$ will be given in Lemma \ref{lem:2}.
The compatibility condition $(\Phi_\alpha')_\beta = (\Phi_\beta')_\alpha$ gives, as the $0$-th order term, the Gauss-Mainardi-Codazzi equations
\[
L_\beta - M_\alpha = [L, M],
\]
and Gol'denweizer's result can be achieved in the first-order term:
\[
L_\beta' - M_\alpha' = [L', M] + [L, M'].
\]
Therefore, the purpose of this section is to describe the above components explicitly.
\begin{lem}
\label{lem:1}
The Gauss-Weingarten type formula for the frame $\Phi'$ is given by
\begin{equation}
\begin{aligned}
\Phi_\alpha' &= \Phi' (L + L'), \quad L' = \Omega_\alpha + [L, \Omega], \\
\Phi_\beta' &= \Phi' (M + M'), \quad M' = \Omega_\beta + [M, \Omega],
\end{aligned}
\end{equation}
where we put $[X, Y] = XY - YX$.
\end{lem}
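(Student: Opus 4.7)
The plan is to verify the formulas for $L'$ and $M'$ by differentiating the defining relation $\Phi' = \Phi(E+\Omega)$ directly, substituting the Gauss-Weingarten formula $\Phi_\alpha = \Phi L$ for the undeformed frame, and then linearizing by dropping products of two first-order displacement quantities, consistently with the linear shell theory adopted earlier in the paper.

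First I apply the product rule to $\Phi' = \Phi(E+\Omega)$ and substitute $\Phi_\alpha = \Phi L$, which yields $\Phi'_\alpha = \Phi L (E+\Omega) + \Phi\,\Omega_\alpha$. Next I rewrite the target identity $\Phi'_\alpha = \Phi'(L+L')$ in the equivalent form $\Phi(E+\Omega)(L+L')$. Equating these two expressions and using the pointwise invertibility of the orthonormal frame $\Phi$, the problem is reduced to the purely algebraic identity $L(E+\Omega) + \Omega_\alpha = (E+\Omega)(L+L')$.

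I then expand both sides and cancel the common $L$, obtaining $L\Omega + \Omega_\alpha = L' + \Omega L + \Omega L'$, which I rearrange as $L' = \Omega_\alpha + [L,\Omega] - \Omega L'$. Since $\Omega$ is first order in the displacements and $L'$ will be too, the product $\Omega L'$ is second order and is discarded within the linear theory, yielding the asserted formula $L' = \Omega_\alpha + [L,\Omega]$. The $\beta$-derivative case is handled by the same computation with $(L,L')$ replaced by $(M,M')$, so no separate argument is needed.

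The main point requiring care is not any single algebraic manipulation but the consistent application of the linearization convention: one must track which factors are first order in the displacements (namely $\Omega$, $\Omega_\alpha$, $\Omega_\beta$, $L'$, $M'$) and drop every product of two such factors. Without this step, one obtains only the exact identity $L' = \Omega_\alpha + [L,\Omega] - \Omega L'$, which is implicit in $L'$; linearization is exactly what converts it into the explicit closed form in the statement. Apart from this bookkeeping point, no genuine obstacle arises in the proof.
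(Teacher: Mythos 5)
Your proposal is correct and follows essentially the same route as the paper: differentiate $\Phi' = \Phi(E+\Omega)$, substitute the Gauss--Weingarten formula, and discard second-order products of displacement quantities. The only cosmetic difference is that the paper packages the linearization into the approximate inverse $(E+\Omega)^{-1} = E-\Omega$, whereas you multiply through by $(E+\Omega)$ and drop the term $\Omega L'$ at the end; the two bookkeeping conventions are equivalent.
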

\begin{proof}
Note that the relation $(E+\Omega)^{-1} = E - \Omega$ in the linear theory, we have
\begin{align*}
\Phi_\alpha'
&= \Phi_\alpha (E+\Omega) + \Phi \Omega_\alpha
= \Phi (L + L \Omega + \Omega_\alpha)
= \Phi' (E-\Omega) (L + L \Omega + \Omega_\alpha)\\
&= \Phi' (L + L \Omega + \Omega_\alpha - \Omega L)
= \Phi' (L + \Omega_\alpha + [L, \Omega]),
\end{align*}
The formula for $\Phi_\beta'$ can be proved in the same way.
\end{proof}
\begin{lem}
\label{lem:2}
The explicit forms of $L'$ and $M'$ are given as follows:
\begin{equation}
\begin{aligned}
L' &= 
\begin{pmatrix}
p \omega & P + \omega_\alpha & H_\circ'\\
-P & - p \omega & \tau A_1 - H_\circ \omega\\
-H_\circ' & - \tau A_1 & 0
\end{pmatrix}, \\
M' &= 
\begin{pmatrix}
- q\omega & -Q & \tau A_2 - K_\circ \omega \\
Q + \omega_\beta & q \omega & K_\circ\\
- \tau A_2 & -K_\circ' & 0
\end{pmatrix},
\end{aligned}
\end{equation}
where we put $H_\circ' = - k_1 A_1$ and $K_\circ' = - k_2 A_2$.
\end{lem}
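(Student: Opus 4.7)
The plan is a direct entry-by-entry computation starting from the identity $L' = \Omega_\alpha + [L,\Omega]$ (and the analogous one for $M'$) supplied by Lemma~\ref{lem:1}. Using the explicit matrix forms of $L$, $M$ and $\Omega$, the nine entries of each of $L\Omega$ and $\Omega L$ are short bilinear expressions in $p$, $q$, $H_\circ$, $K_\circ$ and $\vartheta$, $\psi$, $\omega_1$, $\omega_2$; their differences, combined with the entries of $\Omega_\alpha$, produce candidates for the nine entries of $L'$.

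The second step is to match these candidates against the strain quantities $P$, $Q$, $k_1$, $k_2$, $\tau$, $H_\circ'$ and $K_\circ'$. For the diagonal entries the cross terms $\pm H_\circ\vartheta$ from $L\Omega$ and $\Omega L$ cancel, leaving $\pm p(\omega_1+\omega_2) = \pm p\omega$. For the $(1,2)$ entry the raw output is $(\omega_2)_\alpha - H_\circ\psi$; writing $(\omega_2)_\alpha = \omega_\alpha - (\omega_1)_\alpha$ and using $P = -((\omega_1)_\alpha + H_\circ\psi)$ from \eqref{eq:deformed_PQ} recasts it as $P + \omega_\alpha$. The $(1,3)$ and $(3,1)$ entries collapse to $\pm(\vartheta_\alpha + p\psi)$, which equals $\pm H_\circ'$ by the definitions $H_\circ' = -k_1 A_1$ and $k_1 = -(\vartheta_\alpha + p\psi)/A_1$ from \eqref{eq:k1k2tau}. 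For the $(2,3)$ and $(3,2)$ entries the raw output is $\pm((\psi_\alpha - p\vartheta) - H_\circ\omega_1)$; combining the first form of $\tau$ in \eqref{eq:k1k2tau} with $H_\circ = A_1/R_1$ gives $\tau A_1 = (\psi_\alpha - p\vartheta) + H_\circ\omega_2$, and together with $\omega = \omega_1+\omega_2$ this produces the required $\pm(\tau A_1 - H_\circ\omega)$. The $(3,3)$ entry reduces to $-H_\circ\vartheta + H_\circ\vartheta = 0$.

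The matrix $M'$ is handled by the same procedure under the substitution $\alpha \leftrightarrow \beta$, $p \leftrightarrow q$, $H_\circ \leftrightarrow K_\circ$, $\omega_1 \leftrightarrow \omega_2$, $\vartheta \leftrightarrow \psi$, with $Q$ playing the role of $P$, $k_2$ the role of $k_1$, and the second form of $\tau$ in \eqref{eq:k1k2tau} replacing the first; the definition of $Q$ in \eqref{eq:deformed_PQ} takes care of the off-diagonal entry analogous to $(1,2)$, namely the $(2,1)$ slot. None of the steps constitutes a genuine obstacle: the calculation is a routine multiplication of $3\times 3$ matrices, and the only real care lies in pattern-matching outputs against the strain quantities, particularly recognising the combination $\tau A_1 - H_\circ\omega$ (respectively $\tau A_2 - K_\circ \omega$) inside what at first looks like a plain sum of derivatives of $\psi$, $\vartheta$ and $\omega_i$.
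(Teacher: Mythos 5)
Your overall strategy is exactly the paper's: substitute the explicit matrices into $L' = \Omega_\alpha + [L,\Omega]$ and $M' = \Omega_\beta + [M,\Omega]$ from Lemma~\ref{lem:1}, then rewrite the raw entries using \eqref{eq:deformed_PQ} and \eqref{eq:k1k2tau}. Most of your entry-by-entry matching is correct, but there is a concrete error in the $(3,2)$ entry of $L'$ (and, via your substitution rule, in the $(3,1)$ entry of $M'$).

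You assert that the $(2,3)$ and $(3,2)$ raw outputs are $\pm\bigl((\psi_\alpha - p\vartheta) - H_\circ\omega_1\bigr)$ and conclude that both equal $\pm(\tau A_1 - H_\circ\omega)$. The plus case is right, but the pair is \emph{not} skew: since $\Omega$ carries $\omega_1$ and $\omega_2$ in its $(2,1)$ and $(1,2)$ slots, $[L,\Omega]$ fails to be skew-symmetric whenever $\omega = \omega_1 + \omega_2 \neq 0$. The actual computation gives
\begin{equation*}
[L,\Omega]_{23} = -p\vartheta - H_\circ\omega_1, \qquad
[L,\Omega]_{32} = p\vartheta - H_\circ\omega_2,
\end{equation*}
so that
\begin{equation*}
L'_{23} = \psi_\alpha - p\vartheta - H_\circ\omega_1 = \tau A_1 - H_\circ\omega, \qquad
L'_{32} = -\psi_\alpha + p\vartheta - H_\circ\omega_2 = -\tau A_1 ,
\end{equation*}
whereas your version would yield $L'_{32} = -\tau A_1 + H_\circ\omega$, contradicting the statement of the lemma. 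The same issue recurs in $M'$: the correct entries are $M'_{13} = \tau A_2 - K_\circ\omega$ but $M'_{31} = -\tau A_2$, not $-(\tau A_2 - K_\circ\omega)$. This asymmetry is not cosmetic --- the point of the later section is that $L'$ and $M'$ become skew-symmetric precisely when $\omega = \tau = 0$, so assuming skew-symmetry of these slots in the general computation quietly begs part of that question. The fix is simply to compute $L'_{32}$ and $M'_{31}$ independently rather than as negatives of $L'_{23}$ and $M'_{13}$; everything else in your argument goes through and coincides with the paper's proof. (A smaller imprecision: the $H_\circ\vartheta$ cancellation you cite for the diagonal occurs only in the $(1,1)$ and $(3,3)$ slots, though your conclusions there are correct.)
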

\begin{proof}
Note that we have the following relations from \eqref{eq:k1k2tau}:
\begin{equation}
\label{eq:k1k2tau2}
\begin{aligned}
&H_\circ' = - k_1 A_1 = \vartheta_\alpha + p \psi, \quad
K_\circ' = - k_2 A_2 = \psi_\beta + q \vartheta, \\
&\tau A_1 = \psi_\alpha - p\vartheta + H_\circ \omega_2, \quad
\tau A_2 = \vartheta_\beta - q\psi + K_\circ \omega_1,
\end{aligned}
\end{equation}
A direct calculation shows
\begin{equation}
\begin{aligned}
L' &= 
\begin{pmatrix}
p\omega & (\omega_2)_\alpha -H_\circ \psi & \vartheta_\alpha + p\psi\\
(\omega_1)_\alpha + H_\circ \psi & - p \omega & \psi_\alpha - p \vartheta - H_\circ \omega_1\\
- (\vartheta_\alpha +p\psi) & - \psi_\alpha + p \vartheta - H_\circ \omega_2 & 0
\end{pmatrix},\\
M' &=
\begin{pmatrix}
- q\omega & (\omega_2)_\beta + K_\circ \vartheta & \vartheta_\beta -q \psi - K_\circ \omega_2\\
(\omega_1)_\beta - K_\circ \vartheta & q\omega & \psi_\beta + q\vartheta\\
- \vartheta_\beta + q\psi -K_\circ \omega_1 & - (\psi_\beta + q \vartheta) & 0
\end{pmatrix}.
\end{aligned}
\end{equation}
Using the relations \eqref{eq:deformed_PQ} and \eqref{eq:k1k2tau2} gives the desired result.
\end{proof}
\begin{thm}[Gol'denweizer's compatibility condition]
The relation $L_\beta' - M_\alpha' = [L', M] + [L, M']$ is equivalent to the following system of equations:
\begin{equation}
\label{eq:Goldenweizer}
\begin{aligned}
&P_\beta + Q_\alpha + \omega_{\alpha\beta} + H_\circ' K_\circ + H_\circ K_\circ' = 0,\\
&(H_\circ')_\beta - (PK_\circ + p K_\circ') = (\tau A_2)_\alpha + \tau (A_2)_\alpha - \omega (K_\circ)_\alpha, \\
&(K_\circ')_\alpha - (QH_\circ + q H_\circ') = (\tau A_1)_\beta + \tau (A_1)_\beta - \omega (H_\circ)_\beta.
\end{aligned}
\end{equation}
\end{thm}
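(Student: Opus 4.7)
The plan is to prove the theorem by direct matrix calculation: form the $3\times 3$ matrix $\Delta := L_\beta' - M_\alpha' - [L', M] - [L, M']$ and compute it entry by entry, using the explicit forms of $L, M$ from the preliminaries and of $L', M'$ from Lemma \ref{lem:2}. Since $\Delta$ has nine scalar entries but the claimed equivalent system has only three equations, the strategy is to identify the three entries that produce the three equations of \eqref{eq:Goldenweizer} and to verify that the remaining six either vanish identically or reproduce those three up to sign. This parallels the way the original GMC system \eqref{eq:GMC} emerges from just the $(1,2)$, $(1,3)$, $(2,3)$ entries of $L_\beta - M_\alpha - [L, M]$.

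First I would dispatch the diagonal entries of $\Delta$. For the $(1,1)$ position, substituting $L'_{11} = p\omega$, $M'_{11} = -q\omega$ and computing the two commutators collapses everything to $(p_\beta + q_\alpha + H_\circ K_\circ)\,\omega$, which vanishes by the Gauss equation from \eqref{eq:GMC}. The $(2,2)$ and $(3,3)$ entries are handled analogously and also vanish automatically.

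Next I would extract the three principal equations from the upper off-diagonal entries. Entry $(1,2)$: after the $pq\omega$ cross-terms produced by $[L', M]$ and $[L, M']$ cancel in pairs, the remainder collapses to $P_\beta + Q_\alpha + \omega_{\alpha\beta} + H_\circ' K_\circ + H_\circ K_\circ' = 0$, which is the first equation. Entry $(1,3)$: the Mainardi-Codazzi identities $(A_2)_\alpha = qA_1$ and $(K_\circ)_\alpha = qH_\circ$ from \eqref{eq:GMC} allow one to rewrite the residual $q\tau A_1$ as $\tau (A_2)_\alpha$ and $qH_\circ\omega$ as $\omega (K_\circ)_\alpha$, yielding the second equation. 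Entry $(2,3)$: the symmetric argument with $\alpha \leftrightarrow \beta$, $p \leftrightarrow q$, $H_\circ \leftrightarrow K_\circ$, using $(A_1)_\beta = pA_2$ and $(H_\circ)_\beta = pK_\circ$, produces the third equation. Finally, entries $(2,1)$, $(3,1)$, $(3,2)$ reproduce the same three equations up to sign, so the matrix identity is equivalent to \eqref{eq:Goldenweizer}.

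The main obstacle is bookkeeping rather than conceptual difficulty: each of $L'$ and $M'$ carries both a shear contribution involving $\omega$ and a bending-type contribution involving $P, Q, \tau, H_\circ', K_\circ'$, and the commutators couple both against the $p, q, H_\circ, K_\circ$ entries of $L, M$. Careful grouping of terms, together with repeated use of \eqref{eq:GMC} and the identities \eqref{eq:k1k2tau2} from the proof of Lemma \ref{lem:2}, is needed to make the cancellations transparent and to ensure that the $\tau$-terms line up with the derivatives $(A_1)_\beta$, $(A_2)_\alpha$ on the right-hand sides of the last two equations.
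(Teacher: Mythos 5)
Your proposal is correct and follows essentially the same route as the paper: a direct entry-by-entry computation of $L_\beta' - M_\alpha' - [L',M] - [L,M']$ using the explicit matrices from Lemma \ref{lem:2}, with the Gauss equation killing the diagonal entries, the $(1,2)$, $(1,3)$, $(2,3)$ entries yielding the three equations, the lower-triangular entries duplicating them up to sign, and the final rewriting via $(A_2)_\alpha = qA_1$, $(K_\circ)_\alpha = qH_\circ$ and their $\beta$-counterparts exactly as in the paper's last step. The only difference is organizational (you assemble one matrix $\Delta$ while the paper displays $L_\beta'-M_\alpha'$ and the commutator sum separately before comparing), which is immaterial.
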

\begin{proof}
Using the Mainardi-Codazzi equations \eqref{eq:GMC}${}_{2,3}$, we have
\begin{equation}
L_\beta' = 
\begin{pmatrix}
p_\beta \omega + p \omega_\beta & P_\beta + \omega_{\alpha\beta} & (H_\circ')_\beta \\
-P_\beta & - p_\beta \omega - p \omega_\beta & (\tau A_1)_\beta - p\omega K_\circ - H_\circ \omega_\beta\\
- (H_\circ')_\beta & - (\tau A_1)_\beta & 0
\end{pmatrix},
\end{equation}
\begin{equation}
M_\alpha' = 
\begin{pmatrix}
- q_\alpha \omega - q \omega_\alpha & - Q_\alpha & (\tau A_2)_\alpha - q \omega H_\circ - K_\circ \omega_\alpha\\
Q_\alpha + \omega_{\alpha\beta} & q_\alpha \omega + q \omega_\alpha & (K_\circ')_\alpha\\
- (\tau A_2)_\alpha & - (K_\circ')_\alpha & 0
\end{pmatrix},
\end{equation}
Moreover, applying the Gauss equation \eqref{eq:GMC}${}_1$, we have
\begin{equation}
\label{eq:Goldenweizer1}
\begin{aligned}
&\quad L_\beta' - M_\alpha'\\
&=
\begin{pmatrix}
p \omega_\beta + q\omega_\alpha - H_\circ K_\circ \omega & P_\beta + Q_\alpha + \omega_{\alpha\beta} & (H_\circ')_\beta + \omega_\alpha K_\circ - (\tau A_2)_\alpha + q \omega H_\circ\\
-P_\beta-Q_\alpha-\omega_{\alpha\beta} & H_\circ K_\circ \omega - p \omega_\beta - q \omega_\alpha & (\tau A_1)_\beta - p \omega K_\circ - H_\circ \omega_\beta - (K_\circ')_\alpha\\
(\tau A_2)_\alpha - (H_\circ')_\beta & (K_\circ')_\alpha - (\tau A_1)_\beta & 0
\end{pmatrix}.
\end{aligned}
\end{equation}
Since a direct calculation shows
\[
\begin{aligned}
[L', M] &= 
\begin{pmatrix}
q \omega_\alpha & - H_\circ' K_\circ -2 pq \omega & K_\circ (P+\omega_\alpha) - q(H_\circ \omega - \tau A_1)\\
H_\circ' K_\circ - 2pq \omega & H_\circ K_\circ \omega - q \omega_\alpha & - p \omega K_\circ - q H_\circ'\\
- q \tau A_1 - P K_\circ & q H_\circ' - p \omega K_\circ & - H_\circ K_\circ \omega
\end{pmatrix},\\
[L, M'] &= 
\begin{pmatrix}
p \omega_\beta - H_\circ K_\circ \omega & 2 pq \omega - H_\circ K_\circ' & p K_\circ' + q \omega H_\circ\\
2pq \omega + H_\circ K_\circ' & - p\omega_\beta & p (K_\circ \omega - \tau A_2) - H_\circ (Q + \omega_\beta)\\
q \omega H_\circ - pK_\circ' & Q H_\circ + p \tau A_2 & H_\circ K_\circ \omega
\end{pmatrix},
\end{aligned}
\]
we have
\begin{equation}
\label{eq:Goldenweizer2}
[L', M] + [L, M'] =
\begin{pmatrix}
p\omega_\beta + q \omega_\alpha - H_\circ K_\circ \omega & - H_\circ'K_\circ - H_\circ K_\circ' & k + \omega_\alpha K_\circ\\
H_\circ'K_\circ + H_\circ K_\circ' & H_\circ K_\circ \omega - p\omega_\beta-q\omega_\alpha & -h-\omega_\beta H_\circ\\
q\omega H_\circ - k & h - p\omega K_\circ & 0
\end{pmatrix},
\end{equation}
where we use the following notations to save the space:
\begin{equation}
h = QH_\circ + qH_\circ' + p\tau A_2, \quad
k = PK_\circ + pK_\circ' + q\tau A_1,
\end{equation}
Comparing the components in \eqref{eq:Goldenweizer1} and \eqref{eq:Goldenweizer2} gives the following three equations:
\begin{equation}
\begin{aligned}
&P_\beta + Q_\alpha + \omega_{\alpha\beta} + H_\circ' K_\circ + H_\circ K_\circ' = 0,\\
&(H_\circ')_\beta - (\tau A_2)_\alpha = PK_\circ + p K_\circ' + q\tau A_1 - q \omega H_\circ, \\
&(K_\circ')_\alpha - (\tau A_1)_\beta = QH_\circ + q H_\circ' + p\tau A_2 - p \omega K_\circ.
\end{aligned}
\end{equation}
The definition of $p$, $q$ and the Mainardi-Codazzi equations \eqref{eq:GMC}${}_{2,3}$ give the desired result.
\end{proof}

\section{The symmetry properties in the case $\omega = \tau = 0$.}

From the Lemma \ref{lem:2}, if we assume $\omega = \tau = 0$, that is, no shear strain and the twisting of the coordinate lines occurred under the deformation, the Gauss-Weigarten type formula for the frame $\Phi' = (\be_1', \be_2', \bN')$ is given by
\begin{equation}
\begin{aligned}
\Phi_\alpha' &= \Phi' (L + L'), \quad
L = 
\begin{pmatrix}
0 & p  & H_\circ\\
-p & 0 & 0\\
- H_\circ & 0 & 0
\end{pmatrix}, \quad
L' = 
\begin{pmatrix}
0 & P  & H_\circ'\\
-P & 0 & 0\\
- H_\circ' & 0 & 0
\end{pmatrix}, \\
\Phi_\beta' &= \Phi' (M + M'), \quad
M = 
\begin{pmatrix}
0 & -q & 0\\
q & 0 & K_\circ\\
0 & -K_\circ & 0
\end{pmatrix}, \quad
M' = 
\begin{pmatrix}
0 & -Q & 0\\
Q & 0 & K_\circ'\\
0 & -K_\circ' & 0
\end{pmatrix}.
\end{aligned}
\end{equation}
Recall that the Gauss-Mainardi-Codazzi equations are given by the compatibility condition $(\Phi_\alpha)_\beta = (\Phi_\beta)_\alpha$, that is, the matrix equation
\begin{equation}
L_\beta - M_\alpha = [L, M],
\end{equation}
and the Gol'denweizer's compatibility conditions are given by
\begin{equation}
L_\beta' - M_\alpha' = [L', M] + [L, M'].
\end{equation}
Here, if we consider the prime notation ${}^\prime$ as the `Fr\'echet derivative', 
we find that the `Fr\'echet derivative' of the Gauss-Mainardi-Codazzi system is nothing but Gol'denweizer's system.
More explicitly, for the Gauss-Mainardi-Codazzi equations
\begin{equation}
\begin{aligned}
p_\beta + q_\alpha + H_\circ K_\circ &= 0, \\
(H_\circ)_\beta &= p K_\circ, \\
(K_\circ)_\alpha &= q H_\circ,
\end{aligned}
\end{equation}
we have the following `linearized' equations (see \S \ref{sec:symmetries}):
\begin{equation}
\begin{aligned}
(S_p)_\beta + (S_q)_\alpha + S_{H_\circ} K_\circ + H_\circ S_{K_\circ} &= 0, \\
(S_{H_\circ})_\beta &= S_p K_\circ + p S_{K_\circ}, \\
(S_{K_\circ})_\alpha &= S_q H_\circ, + q S_{H_\circ},
\end{aligned}
\end{equation}
where we denote a symmetry of the function $u$ as $S_u$.
Comparing these equations with Gol'denweizer's equations \eqref{eq:Goldenweizer} for $\omega = \tau = 0$, they have symmetries 
\begin{equation}
(S_p, S_q, S_{H_\circ}, S_{K_\circ}) = (P, Q, H_\circ', K_\circ').
\end{equation}
By using the first fundamental forms and curvatures of the middle surface of the shell $\br$, the above symmetries are equivalent to
\begin{equation}
S_{A_1} = A_1 \varepsilon_1, \quad
S_{A_2} = A_2 \varepsilon_2, \quad
S_{\kappa_1} = k_1 - \varepsilon_1 \kappa_1, \quad
S_{\kappa_2} = k_2 - \varepsilon_2 \kappa_2.
\end{equation}
In conclusion, we have the following theorem:
\begin{thm}
For the coefficients of the first fundamental forms $A_1$, $A_2$ and the principal curvatures $\kappa_1$, $\kappa_2$, if there exist their symmetries $S_{A_1}$, $S_{A_2}$, $S_{\kappa_1}$, $S_{\kappa_2}$, then the strains $\varepsilon_1$, $\varepsilon_2$, $k_1$ and $k_2$ defined by
\begin{equation}
\label{eq:strain_symmetry}
\varepsilon_1 = \frac{S_{A_1}}{A_1}, \quad
\varepsilon_2 = \frac{S_{A_2}}{A_2}, \quad
k_1 = S_{\kappa_1} + \frac{S_{A_1}}{A_1} \kappa_1, \quad
k_2 = S_{\kappa_2} + \frac{S_{A_2}}{A_2} \kappa_2,
\end{equation}
satisfy the Gol'denweizer's compatibility condition for $\omega = \tau = 0$.
\end{thm}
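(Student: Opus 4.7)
The plan is to reduce Gol'denweizer's system \eqref{eq:Goldenweizer} (specialised to $\omega=\tau=0$) to the linearised Gauss-Mainardi-Codazzi system in the variables $(A_1,A_2,\kappa_1,\kappa_2)$, at which point the hypothesis that $S_{A_1}, S_{A_2}, S_{\kappa_1}, S_{\kappa_2}$ are symmetries delivers the conclusion immediately. Concretely, I would regard
\[
p=\frac{(A_1)_\beta}{A_2},\quad q=\frac{(A_2)_\alpha}{A_1},\quad H_\circ=-\kappa_1 A_1,\quad K_\circ=-\kappa_2 A_2
\]
as functionals of the four primary variables and compute their Fr\'echet derivatives in the direction $(S_{A_1},S_{A_2},S_{\kappa_1},S_{\kappa_2})$ by the chain rule, obtaining
\[
S_p=\frac{(S_{A_1})_\beta-p\,S_{A_2}}{A_2},\qquad S_q=\frac{(S_{A_2})_\alpha-q\,S_{A_1}}{A_1},
\]
\[
S_{H_\circ}=-\kappa_1 S_{A_1}-A_1 S_{\kappa_1},\qquad S_{K_\circ}=-\kappa_2 S_{A_2}-A_2 S_{\kappa_2}.
\]

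Next I would substitute the ansatz \eqref{eq:strain_symmetry} into the definitions of $P, Q$ from \eqref{eq:deformed_PQ} (with $\omega=0$) and of $H_\circ'=-k_1 A_1$, $K_\circ'=-k_2 A_2$ from Lemma \ref{lem:2}. Using $(A_1)_\beta=pA_2$ in the expression for $P$,
\[
P=\frac{1}{A_2}\bigl((A_1\varepsilon_1)_\beta-\varepsilon_2 (A_1)_\beta\bigr)=\frac{(S_{A_1})_\beta-p\,S_{A_2}}{A_2}=S_p,
\]
and analogously $Q=S_q$. For the curvature-type quantities the substitution is immediate:
\[
H_\circ'=-A_1\Bigl(S_{\kappa_1}+\frac{S_{A_1}}{A_1}\kappa_1\Bigr)=-\kappa_1 S_{A_1}-A_1 S_{\kappa_1}=S_{H_\circ},\qquad K_\circ'=S_{K_\circ}.
\]

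With these identifications, the three equations in \eqref{eq:Goldenweizer} under $\omega=\tau=0$ coincide term by term with the linearisation of \eqref{eq:GMC}, namely
\[
(S_p)_\beta+(S_q)_\alpha+S_{H_\circ}K_\circ+H_\circ S_{K_\circ}=0,
\]
\[
(S_{H_\circ})_\beta=S_p K_\circ+p S_{K_\circ},\qquad (S_{K_\circ})_\alpha=S_q H_\circ+q S_{H_\circ},
\]
which hold by the symmetry hypothesis. The only obstacle is careful bookkeeping: tracking the chain rule through the two-step change of variables $(A_1,A_2,\kappa_1,\kappa_2)\mapsto(p,q,H_\circ,K_\circ)\mapsto\text{GMC}$, and invoking the defining relations $(A_1)_\beta=pA_2$ and $(A_2)_\alpha=qA_1$ at the right moments; no deeper difficulty is expected.
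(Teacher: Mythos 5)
Your proposal is correct and follows essentially the same route as the paper: the paper likewise observes that under $\omega=\tau=0$ the Gol'denweizer system is exactly the linearized Gauss--Mainardi--Codazzi system with $(S_p,S_q,S_{H_\circ},S_{K_\circ})=(P,Q,H_\circ',K_\circ')$, and then translates these symmetries into the variables $(A_1,A_2,\kappa_1,\kappa_2)$ to read off \eqref{eq:strain_symmetry}. Your explicit chain-rule verification that $P=S_p$, $Q=S_q$, $H_\circ'=S_{H_\circ}$, $K_\circ'=S_{K_\circ}$ just makes precise the bookkeeping the paper leaves implicit.
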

Finally, we give some examples of the theorem for classes of surfaces whose Gauss equations are `solitonic', sometimes referred to as `integrable' surfaces.
\begin{ex}[Minimal surfaces]
If the middle surface of a shell $\br$ is minimal, that is, the mean curvature $\cH$ vanishes everywhere, we have
\begin{equation}
A_1 = A_2 = e^u, \quad
H_\circ = e^{-u}, \quad
K_\circ = - e^{-u},
\end{equation}
in the conformal curvature line coordinates \cite{RSSLame}.
From the Gauss equation \eqref{eq:GMC}${}_1$, the function $u$ must satisfy the Liouville equation
\begin{equation}
u_{\alpha\alpha} + u_{\beta\beta} = e^{-2u}.
\end{equation}
By taking the Fr\'echet derivative, we have the linearized equation
\begin{equation}
S_{\alpha\alpha} + S_{\beta\beta} + 2 e^{-2u} S = 0,
\end{equation}
which is the Schr\"odinger type equation.
A solution $S$ gives a symmetry of the Liouville equation, and for any such symmetry, one can check
\begin{equation}
S_{A_1} = S_{A_2} = S e^u, \quad
S_{\kappa_1} = 2 S e^{-2u}, \quad
S_{\kappa_2} = - 2 S e^{-2u}.
\end{equation}
Hence, from the equations \eqref{eq:strain_symmetry}, the strains
\begin{equation}
\varepsilon_1 = \varepsilon_2 = S, \quad
k_1 = S e^{-2u}, \quad
k_2 = - S e^{-2u},
\end{equation}
satisfy the Gol'denweizer's compatibility condition with $\omega = \tau = 0$.
\end{ex}
\begin{ex}[Constant mean curvature (CMC) surfaces]
If the middle surface of a shell has non-zero constant mean curvature $\cH$, we have
\begin{equation}
A_1 = A_2 = e^u, \quad
H_\circ = -2 \cH \sinh u, \quad
K_\circ = -2 \cH \cosh u,
\end{equation}
in the conformal curvature line coordinates.
From the Gauss equation \eqref{eq:GMC}${}_1$, the function $u$ must satisfy the elliptic sinh-Gordon equation
\begin{equation}
u_{\alpha\alpha} + u_{\beta\beta} + 4\cH^2 \sinh u \cosh u = 0.
\end{equation}
The linearized equation becomes the Schr\"odinger type equation:
\begin{equation}
S_{\alpha\alpha} + S_{\beta\beta} + 4 \cH^2 \cosh (2u) S = 0.
\end{equation}
Hence, in the same manner as before, for a solution $u$ of the elliptic sinh-Gordon equation and any symmetry $S$, the strains defined by
\begin{equation}
\varepsilon_1 = \varepsilon_2 = S, \quad
k_1 = 2 \cH S e^{-u} \cosh u, \quad
k_2 = 2 \cH S e^{-u} \sinh u,
\end{equation}
satisfy the Gol'denweizer's compatibility condition with $\omega = \tau = 0$.
\end{ex}
\begin{ex}[Pseudo-spherical surfaces]
If the middle surface of a shell has constant negative Gaussian curvature $\cK = -1/\rho^2$, which is sometimes referred to as a pseudo-spherical surface, we have
\begin{equation}
A_1 = \cos u, \quad
A_2 = \sin u, \quad
H_\circ = - \frac{1}{\rho} \sin u, \quad
K_\circ = \frac{1}{\rho} \cos u,
\end{equation}
in the curvature line coordinates \cite{Rogers_Schief_2002}.
From the Gauss equation, the function $u$ must satisfy the (hyperbolic) sine-Gordon equation
\begin{equation}
u_{xx} - u_{yy} = \frac{1}{\rho^2} \sin u \cos u.
\end{equation}
A symmetry $S$ is a solution of the linearized equation
\begin{equation}
S_{xx} - S_{yy} = \frac{1}{\rho^2} \cos (2u) S.
\end{equation}
Therefore, for any symmetry $S$ of the sine-Gordon equation, the strains defined by
\begin{equation}
\varepsilon_1 = - S \tan u, \quad
\varepsilon_2 = S \cot u, \quad
k_1 = k_2 = \frac{S}{\rho},
\end{equation}
satisfy the Gol'denweizer's compatibility condition with $\omega = \tau = 0$.
\end{ex}

\section*{Appendix}

One can check that the result \eqref{eq:Goldenweizer} and the equations (5.6) that appeared in the book \cite{Novozhilov} are certainly equivalent with some appropriate replacements.
Note that the quantities $P$, $Q$ defined in \eqref{eq:deformed_PQ} can be rewritten as follows:
\begin{equation}
\begin{aligned}
&P= \frac{1}{A_2} \left( A_1 (\varepsilon_1)_\beta + (A_1)_\beta (\varepsilon_1 - \varepsilon_2) - \frac{1}{2} A_2 \omega_\alpha - (A_2)_\alpha \omega \right) - \frac{1}{2} \omega_\alpha,\\
&Q= \frac{1}{A_1} \left( A_2 (\varepsilon_2)_\alpha + (A_2)_\alpha (\varepsilon_2 - \varepsilon_1) - \frac{1}{2} A_1 \omega_\beta - (A_1)_\beta \omega \right) - \frac{1}{2} \omega_\beta.
\end{aligned}
\end{equation}
Therefore the equation \eqref{eq:Goldenweizer}${}_1$ becomes
\[
\begin{aligned}
\frac{-k_1}{R_1} + \frac{-k_2}{R_2} &+ \frac{1}{A_1A_2} 
\left[ \frac{\d}{\d \alpha} \left\{ \frac{1}{A_1} \left( A_2 (\varepsilon_2)_\alpha + (A_2)_\alpha (\varepsilon_2 - \varepsilon_1) - \frac{1}{2} A_1 \omega_\beta - (A_1)_\beta \omega \right)  \right\} \right.\\
&+ \left. \frac{\d}{\d \beta} \left\{ \frac{1}{A_2} \left( A_1 (\varepsilon_1)_\beta + (A_1)_\beta (\varepsilon_1 - \varepsilon_2) - \frac{1}{2} A_2 \omega_\alpha - (A_2)_\alpha \omega \right) \right\} \right] = 0,
\end{aligned}
\]
which is equivalent to the equation (5.6${}^{\prime\prime\prime}$) in \cite{Novozhilov} with the replacement $(-k_1, -k_2) \mapsto (\kappa_1, \kappa_2)$ and $(\alpha, \beta) \mapsto (\alpha_1, \alpha_2)$.
In the same way, the equations \eqref{eq:Goldenweizer}${}_{2,3}$ become
\[
\begin{aligned}
(A_1 (-k_1))_\beta - (-k_2) (A_1)_\beta &- (A_2\tau)_\alpha - \tau (A_2)_\alpha + \frac{\omega}{R_1} (A_2)_\alpha\\
&- \frac{1}{R_2} \left( (A_1 \varepsilon_1)_\beta - (A_2 \omega)_\alpha - \varepsilon_2 (A_1)_\beta \right) = 0,\\
(A_2 (-k_2))_\alpha - (-k_1) (A_2)_\alpha &- (A_1\tau)_\beta - \tau (A_1)_\beta + \frac{\omega}{R_2} (A_1)_\beta\\
&- \frac{1}{R_1} \left( (A_2 \varepsilon_2)_\alpha - (A_1 \omega)_\beta - \varepsilon_1 (A_2)_\alpha \right) = 0,
\end{aligned}
\]
which are equivalent to the equations (5.6${}^{\prime}$), (5.6${}^{\prime\prime}$) in \cite{Novozhilov}, respectively.

\section*{Acknowledgments}
This study is supported by JST CREST Grant Number JPMJCR1911 and JSPS KAKENHI Grant Number JP24K16924.

\bibliographystyle{amsplain}
\bibliography{main}
\end{document}